\newcommand{\R}{\mathbb{R}}
\newcommand{\E}{\mathbb{E}}
\newcommand{\1}[1]{\mathbbm{1}_{\{#1\}}}
\DeclareMathOperator{\Cov}{Cov}
\newtheorem{theorem}{Theorem}[section]
\newtheorem{proposition}[theorem]{Proposition}
\newtheorem{definition}[theorem]{Definition}
\newtheoremstyle{example}{\topsep}{\topsep}%
     {}
     {}
     {\bfseries}
     {}
     {\newline}
     {\thmname{#1}\thmnumber{ #2}\thmnote{ #3}}
\theoremstyle{example}
\newtheorem{example}[theorem]{Example}
\author{Joe Neeman\footnote{Department of Statistics, U.C.\ Berkeley. joeneeman@gmail.com}}
\title{A law of large numbers for weighted plurality}
\begin{document}
\maketitle

\begin{abstract}
Consider an election between $k$ candidates in which
each voter votes randomly (but not necessarily independently)
and suppose that there is a single candidate that every voter
prefers (in the sense that each voter is more likely to
vote for this special candidate than any other candidate).
Suppose we have a voting rule that takes all of the votes
and produces a single outcome and suppose that each
individual voter has little effect on the outcome
of the voting rule.
If the voting rule is a weighted plurality,
then we show that with high probability, the preferred
candidate will win the election. Conversely, we show
that this statement fails for all other reasonable voting rules.

This result is an extension of one by
H\"aggstr\"om, Kalai and Mossel, who
proved the above in the case $k=2$.
\end{abstract}

\section{Introduction}

For elections between two candidates, it is well known
that voting rules in which every voter has a small effect
are good rules in the sense that they ``aggregate
information well:'' if every voter has a small
bias towards the same candidate then that candidate
will win with overwhelming probability. When voters
vote independently, this fact was noted by Margulis~\cite{Marg}
and Russo~\cite{Russo},
whose results were later strengthened by Kahn, Kalai
and Linial~\cite{KKL} and by Talagrand~\cite{Tal}.

When the voters are not independent, the situation is
more complicated. It is no longer true, then, that every
reasonable voting rule aggregates well. In fact,~\cite{HKM}
show that if we want the aggregation to hold for every
distribution of the voters, then weighted majority
functions are the only option. We extend their result
to the non-binary case.

The author would like to thank Elchanan Mossel for suggesting
this problem and providing fruitful discussions.

\section{Definitions and results}
In the introduction, we made a few allusions to ``reasonable''
voting rules. Let us now say precisely what
that means: we will require that our voting rules
do not have a built-in preference for any alternative.
This is a common assumption, and its definition is standard (see, eg.~\cite{BramsFishburn:02}).
In what follows, the notation $[k]$ stands for the set $\{0, \dots, k-1\}$.

\begin{definition}
A function $f: [k]^n \to [k]$ is \emph{neutral} if
$f(\sigma(x)) = \sigma(f(x))$ for all $x \in [k]^n$
and all permutations $\sigma$ on $[k]$, where
$\sigma(x)_i = \sigma(x_i)$.
\end{definition}

Note that in the case $k=2$, a function is neutral
if, and only if, it is anti-symmetric according
to the definition in~\cite{HKM}.

\begin{example}\label{ex:neutral}
  When $k = 2$ and $n$ is odd, then the simple majority
  function (for which $f(x) = 1$ if $\#\{i: x_i = 1\} > \#\{i: x_i = 0\}$)
  is neutral.
  On the other hand, if $n$ is even then in order to fully specify
  the simple majority function, we need to say what happens in the
  case of a tie; the choice of tie-breaking rule will determine
  whether the resulting function is neutral. For example,
  if we define $f(x) = x_1$ for every tied configuration $x$, then
  $f$ is neutral. On the other hand, if $f(x) = 1$ for every tied
  configuration $x$, then $f$ is not neutral.

  The example can be extended to $k \ge 3$. In this case,
  consider
  the tie-breaking rule $f(x) = x_i$ where $i$ is the smallest possible
  number for which $x_i$ is equal to one of the tied alternatives.
  This tie-breaking rule is neutral, and it
  is more natural than setting $f(x) = x_1$
  because it guarantees that the output of $f$ is one of the tied
  alternatives.
\end{example}

\subsection{Weighted plurality functions}
Let us say precisely what we mean by a weighted plurality
function. The definition that we take here generalizes
the definition from~\cite{HKM} of a weighted majority function.

\begin{definition}
\label{def:wp}
A function $f: [k]^n \to [k]$ is a \emph{weighted plurality function}
if there exist weights $w_1, \dots, w_n \in \R_{\ge 0}$ such that
$\sum_i w_i = 1$ and
for all $a, b \in [k]$,
$f(x) = a$ implies that
\[
\sum_{i: x_i = a} w_i \ge \sum_{i: x_i = b} w_i.
\]
\end{definition}

Note that the above definition does not prescribe a particular
behavior if a tie occurs between two alternatives.
If the weights are chosen so that ties never occur, then the
weighted plurality function is clearly neutral.
Moreover, for any set of weights
we can construct a neutral weighted plurality function
with those weights by following the tie-breaking rule
outlined in Example~\ref{ex:neutral}.

\subsection{The influence of a voter}
The final notion that we need before stating our result
is a way to quantify the power of a single voter.
When $k=2$, the notion of \emph{effect} is well-established
and can be found, for example, in~\cite{HKM}.
However, there does not seem to be a well-established way of quantifying
the effect of voters for non-binary social choice functions. Here,
we propose a definition that closely resembles the one
used in~\cite{HKM} for binary functions.

\begin{definition}
Let $f$ be a function $[k]^n \to [k]$ and fix a probability
distribution $P$ on $[k]^n$. The \emph{effect of voter $i$}
is
\[
e_i(f, P) = \sum_{j=1}^k P(f(X) = j | X_i = j) - P(f(X) = j | X_i \ne j),
\]
where $X$ is a random variable distributed according to $P$.
\end{definition}

Note that for the case $k = 2$, the preceding definition reduces to
\[
e_i(f, P) = 
2(P(f(X) = 1 | X_i = 1) - P(f(X) = 1 | X_i = 0)),
\]
which is just twice the definition in~\cite{HKM} of a voter's effect.
Also, the effect is closely related to the correlation between
the voters and the outcome:
\begin{align*}
P(f(X) = j | X_i = j) - P(f(X) = j | X_i \ne j)
&= \frac{\Cov(\1{f=j}, \1{X_i = j})}{P(X_i = j) P(X_i \ne j)} \\
&\ge 4 \Cov(\1{f=j}, \1{X_i = j})
\end{align*}
and so
\[
e_i(f, P) \ge 4 \sum_j \Cov(\1{f=j}, \1{X_i=j}).
\]

\begin{example}
  The simplest example of $e_i(f, P)$ is when $P$
  is a product measure (ie.\ the $X_i$ are independent)
  and the function $f$
  does not depend on its $i$th coordinate; in that case,
  $P(f(X) = j | X_i = j) = P(f(X) = j | X_i \ne j)$
  for all $j$ and so $e_i(f, P) = 0$. On the other hand, if
  $P$ is a distribution such that $X_1 = X_2 = \cdots = X_n$
  with probability 1, and if $f$ is a plurality function, then
  $P(f(X) = j | X_i = j) = 1$ for all $j$, while
  $P(f(X) = j | X_i \ne j) = 0$; hence, $e_i(f, P) = 1$ for all $i$.

  For a less trivial example, suppose that the $X_i$ are independent
  and uniformly distributed on $[k]$. Let $f$ be an unweighted plurality
  function. Then the Central Limit Theorem implies that $e_i(f, P)
  = O(\frac{1}{\sqrt n})$ as $n \to \infty$.

  On the other hand, suppose that $f$ is still an unweighted
  plurality function and the $X_i$ are independent, but now
  $P(X_i = 1) > P(X_i = j) + \delta$ for some $\delta > 0$
  and all $j \ne 1$. Then Hoeffding's inequality implies
  that $P(f(X) = 1 | X_i) \ge 1 - 2\exp(-\delta^2 n / 4)$ for
  sufficiently large $n$, regardless of the value of $X_i$. In particular,
  this implies that $e_i(f, P) = O(\exp(-\delta^2 n / 4))$. 
  Compared to the case where the $X_i$ are uniformly distributed,
  this demonstrates that $e_i(f, P)$ can depend strongly on $P$,
  even when $P$ is restricted to being a product measure.
\end{example}

\subsection{The main result}
Our main theorem is the following:
\begin{theorem}\label{thm:wp}
\begin{enumerate}
\item[(a)]
For every $\delta > 0$ and $\epsilon > 0$, there is a $\tau > 0$
such that for every weighted plurality function $f$ with weights $w_i$
and every probability
distribution $P$ on $[k]^n$, if $e_i(f, P) \le \tau$ and
there is a set $A \subset [n]$ such that
$\sum_i w_i P(X_i = a) \ge \sum_i w_i P(X_i = b) + \delta$ for all $i \in [n]$,
all $a \in A$ and all $b \not \in A$,
then $P(f(X) \in A) \ge 1 - \epsilon$.

\item[(b)]
If $f$ is not a weighted plurality function then there
exists a probability distribution $P$ on $[k]^n$ such that
$P(X_i = 2) > P(X_i = 1)$ for all $i \in [n]$
but $P(f(X) = 1) = 1$ (and hence $e_i(f, P) = 0$ for all $i$).
\end{enumerate}
\end{theorem}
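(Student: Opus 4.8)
For part (a) I will track the weighted vote counts $S_j = \sum_i w_i \1{X_i = j}$ and their means $\mu_j = \E[S_j] = \sum_i w_i P(X_i = j)$. The defining inequality of a weighted plurality says exactly that $S_{f(X)} = \max_j S_j$ pointwise, so $\E[S_{f(X)}] \ge \mu_{a^*} = M$, where $M := \max_j \mu_j$ and $a^*$ attains it. The calculation that drives everything is the identity
\[ \sum_i w_i \sum_j \Cov(\1{f=j}, \1{X_i = j}) = \E[S_{f(X)}] - \E[\mu_{f(X)}], \]
obtained by writing each covariance as $P(f=j, X_i=j) - P(f=j)P(X_i=j)$, multiplying by $w_i$ and summing: the first group of terms collapses to $\E[\sum_j \1{f=j} S_j] = \E[S_{f(X)}]$ and the second to $\sum_j P(f=j)\mu_j = \E[\mu_{f(X)}]$.

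Now I feed in the hypothesis. The effect bound recorded in the excerpt gives $\sum_j \Cov(\1{f=j},\1{X_i=j}) \le e_i/4 \le \tau/4$ for each $i$, so, since the $w_i$ are nonnegative and sum to $1$, the left-hand side above is at most $\tau/4$; hence $\E[\mu_{f(X)}] \ge M - \tau/4$. For the matching upper bound, note that since every $a \in A$ strictly exceeds every $b \notin A$ the maximum $M$ is attained inside $A$, and $\mu_b \le M - \delta$ for all $b \notin A$; therefore $\E[\mu_{f(X)}] = \sum_j P(f=j)\mu_j \le M - \delta\, P(f(X) \notin A)$. Comparing the two bounds gives $P(f(X) \notin A) \le \tau/(4\delta)$, so the choice $\tau = 4\delta\epsilon$ finishes (a) --- with a $\tau$ depending on $\delta,\epsilon$ alone, uniformly in $n$, $k$ and $f$.

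For part (b) I will argue by a theorem of the alternative. Being a weighted plurality function is precisely the solvability of the homogeneous system $w \ge 0$, $w \ne 0$, $\langle w, v_{x,b} \rangle \ge 0$ for every $x \in [k]^n$ and every $b \ne f(x)$, where $v_{x,b} \in \R^n$ has $i$th coordinate $\1{x_i = f(x)} - \1{x_i = b}$ (any nonzero nonnegative solution normalizes to weights summing to $1$). The cone of solutions is trivial exactly when the vectors $v_{x,b}$ together with the nonnegative orthant positively span $\R^n$, and a short separation argument shows this happens iff there are coefficients $\lambda_{x,b} \ge 0$, not all zero, with $\sum_{x,b} \lambda_{x,b} v_{x,b} < 0$ in every coordinate. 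So if $f$ is not a weighted plurality function we obtain such $\lambda_{x,b}$, i.e. $\sum_{x,b} \lambda_{x,b}(\1{x_i = f(x)} - \1{x_i = b}) < 0$ for every $i$.

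It remains to convert this certificate into the distribution $P$, and this is where neutrality enters. For each pair $(x,b)$ in the support of $\lambda$ choose a permutation $\sigma_{x,b}$ of $[k]$ with $\sigma_{x,b}(f(x)) = 1$ and $\sigma_{x,b}(b) = 2$ (possible since $f(x) \ne b$), and let $P$ assign mass proportional to $\lambda_{x,b}$ to the configuration $\sigma_{x,b}(x)$. Neutrality gives $f(\sigma_{x,b}(x)) = \sigma_{x,b}(f(x)) = 1$, so $P(f(X) = 1) = 1$; and because $\sigma_{x,b}(x)_i = 1$ iff $x_i = f(x)$ and $\sigma_{x,b}(x)_i = 2$ iff $x_i = b$, the coordinatewise inequality above becomes $P(X_i = 2) - P(X_i = 1) > 0$ for every $i$, as required. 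I expect the main obstacle to be part (b): one has to set up the alternative with the right treatment of strictness and of the $w \ge 0$, $w \ne 0$ constraints, and above all to realize that an a priori unstructured certificate --- mixing many different winners $f(x)$ and runners-up $b$ --- can be routed onto the single ordered pair $(1,2)$ only by relabelling, which is exactly the step that consumes neutrality.
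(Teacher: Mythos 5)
Your proposal is correct, and both halves run on the same engines as the paper's proof. For part (a), your identity $\sum_i w_i\sum_j\Cov(\1{f=j},\1{X_i=j}) = \E[S_{f(X)}]-\E[\mu_{f(X)}]$ is exactly the computation in~\eqref{eq:corr}; the only cosmetic difference is that you lower-bound $\E[S_{f(X)}]$ by the single maximizer $\mu_{a^*}=M$, where the paper uses the convex combination $\tilde\alpha$ of the alternatives in $A$, and both routes land on $P(f(X)\notin A)\le\tau/(4\delta)$ and the choice $\tau=4\delta\epsilon$. For part (b) the underlying mechanism is again the same --- a linear-programming/Farkas certificate whose nonnegative multipliers, once normalized, become the adversarial distribution $P$ --- but you arrange it differently: the paper first spends neutrality (Proposition~\ref{prop:fair}) to reduce the weighted-plurality condition to the single ordered pair $(1,2)$ on configurations with $f(x)=1$, and only then writes an explicit primal--dual pair, so its dual certificate is automatically supported on $\{f=1\}$; you instead apply a Gordan-type alternative to the full unreduced system over all pairs $(f(x),b)$ and spend neutrality at the end, relabelling each term of the certificate by a permutation sending $f(x)\mapsto 1$ and $b\mapsto 2$. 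Your ordering makes it more transparent exactly where neutrality is consumed (and that the theorem of the alternative itself needs none), at the cost of leaving the separation argument as a one-line assertion, while the paper's early reduction keeps the linear program small and makes the construction explicitly algorithmic; both correctly deliver $P(X_i=2)>P(X_i=1)$ for all $i$ with $f(X)=1$ almost surely.
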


We remark that the Theorem is constructive in the sense
that we can give an algorithm (based on solving a linear program)
which either constructs some weights $w_i$ witnessing the fact
that $f$ is a weighted plurality, or a probability distribution $P$
satisfying part (b).

Parts (a) and (b) of Theorem~\ref{thm:wp} are converse to one another
in the following sense: under the hypothesis of small effects,
part (a) says that if there is a gap between the popularity of
the most popular alternatives $A$ and the less popular alternatives
$A^c$ then a weighted plurality function will choose an alternative
in $A$. Part (b) shows that this property fails for every
function that is not a weighted plurality.
Note that part (a) has an important special case, which is closer to
the statement of~\cite{HKM}: if $P(X_i = a) \ge P(X_i = b) + \delta$
for all $i \in [n]$ and all $b \ne a$, then $f(X) = a$ with high
probability if the effects are small enough.

The remainder of the paper is devoted to the proof of Theorem~\ref{thm:wp}.

\begin{proof}[Proof of Theorem~\ref{thm:wp} (a)]
This part of the proof follows very closely the argument
in~\cite{HKM}.
Suppose that $f$ is a weighted plurality function
with weights $w_i$.
The first step is to show
that $f$ is ``correlated'' in some sense with each voter:
define $p_{ij} = P(X_i = j)$ and let $W_j$ be the (random) weight
assigned to alternative $j$: $W_j = \sum_{i: X_i = j}w_i$. Then
\begin{align}\label{eq:corr}
\notag & \E \sum_{i=1}^n w_i \sum_{j=1}^k \1{f(X) = j} (\1{X_i = j} - p_{ij}) \\
\notag &=
\E \left(\sum_{i,j} w_i \1{f(X) = j} \1{X_i = j}
- \sum_{i,j} \1{f(X) = j} w_i p_{ij}\right) \\
&=
\E \sum_{i,j} w_i \1{f(X) = j} \1{X_i = j}
- \sum_j P(f = j) \E W_j.
\end{align}
Now, let $\alpha_j = P(f = j)$ and set $\tilde \alpha_j = \alpha_j / (\sum_{i \in A} \alpha_i)$
for $j \in A$ and $\tilde \alpha_j = 0$ otherwise.
The first term of~\eqref{eq:corr} is just
\begin{multline}\label{eq:first}
\E \sum_{i,j} w_i \1{f(X) = j} \1{X_i = j}
=
\E \sum_j \1{f(X) = j} W_j \\
\ge
\E \sum_j \1{f(X) = j} \sum_i \tilde \alpha_i W_i
= \sum_i \tilde \alpha_i \E W_j
\end{multline}
since the winning alternative always has at least as much weight
as any convex combination of alternatives.
Since $\min_{j \in A} \E W_j \ge \max_{j \not \in A} \E W_j + \delta$,
we can plug~\eqref{eq:first} into~\eqref{eq:corr} to obtain
\begin{align*}
\eqref{eq:corr}
&\ge \sum_j \tilde \alpha_j \E W_j - \sum_j \alpha_j \E W_j \notag \\
&\ge \sum_{j\in A} (\tilde \alpha_j - \alpha_j) \delta \notag \\
&= \delta P(f \not \in A).
\end{align*}

Recalling that $e_i(f, P) \ge 4 \sum_j \Cov(\1{f=j}, \1{X_i=j})$, we have
\begin{align*}
\delta P(f \not \in A)
&\le \E \sum_{i=1}^n w_i \sum_{j=1}^k \1{f(X) = j} (\1{X_i = j} - p_{ij}) \\
&\le \frac{1}{4} \sum_i w_i e_i(f, P) \\
&\le \frac{\tau}{4}
\end{align*}
and so one direction of the theorem is proved once we take $\tau$
small enough that $\epsilon \ge \tau / (4\delta)$.
\end{proof}

The proof of the second part of the theorem follows the
idea of~\cite{HKM}, in that we use linear programming duality
to find a witness for $f$ being a weighted plurality function.
However, the details of the proof are quite different, since~\cite{HKM}
uses a well-known linear program (the fractional vertex cover of a
hypergraph) which does not extend beyond $k=2$.

The proof idea is this:
we will write down a linear program
and its dual. If the primal program has a large
enough value it will turn out that $f$ is a weighted plurality function.
Otherwise, the dual has a small value and
the dual variables witness the claim of Theorem~\ref{thm:wp} (b).
In particular, note that this proof provides the algorithm
that we mentioned after the statement of Theorem~\ref{thm:wp}.

First we make a trivial observation that will simplify
our linear program considerably:
if a function is neutral, it is easier to check whether it is
a weighted plurality because it is not necessary to try all possible
combinations of $a, b \in [k]$:
\begin{proposition}\label{prop:fair}
Suppose $f: [k]^n \to [k]$ is neutral. Then $f$ is a weighted
plurality if and only if there exist weights $w_1, \dots, w_n \in \R$
such that $f(x) = 1$ implies that
\[
\sum_{i: x_i = 1} w_i \ge \sum_{i: x_i = 2} w_i.
\]
\end{proposition}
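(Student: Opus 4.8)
The plan is to prove the nontrivial (``if'') direction by using neutrality to upgrade the single inequality for the pair $(1,2)$ into the entire family of inequalities demanded by Definition~\ref{def:wp}, \emph{keeping the same weight vector throughout}. The ``only if'' direction is immediate: if $f$ is a weighted plurality with weights $w_i$, then specializing the defining inequality to $a = 1$, $b = 2$ shows that these very weights already satisfy the condition $f(x) = 1 \Rightarrow \sum_{i: x_i = 1} w_i \ge \sum_{i: x_i = 2} w_i$. So all the content is in manufacturing, from weights that control only the ordered pair $(1,2)$, a certificate that $f$ selects a maximum-weight alternative for \emph{every} ordered pair.

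Write $W_j(x) = \sum_{i: x_i = j} w_i$ for the weight mass on alternative $j$. Suppose we are handed weights $w_i$ with $f(x) = 1 \Rightarrow W_1(x) \ge W_2(x)$, fix any $x$ with $f(x) = a$ and any $b \ne a$, and aim to show $W_a(x) \ge W_b(x)$. The key step is to pick a permutation $\sigma$ of $[k]$ with $\sigma(a) = 1$ and $\sigma(b) = 2$; such a $\sigma$ exists because $a \ne b$ makes the partial assignment injective, so it extends to a bijection of $[k]$. Setting $y = \sigma(x)$, neutrality gives $f(y) = f(\sigma(x)) = \sigma(f(x)) = \sigma(a) = 1$, so the hypothesis applies to $y$ and yields $W_1(y) \ge W_2(y)$.

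It then remains only to translate this back to $x$. Since $y_i = \sigma(x_i)$, we have $y_i = 1$ exactly when $x_i = \sigma^{-1}(1) = a$, and $y_i = 2$ exactly when $x_i = \sigma^{-1}(2) = b$; hence $W_1(y) = W_a(x)$ and $W_2(y) = W_b(x)$, so the inequality $W_1(y) \ge W_2(y)$ is literally $W_a(x) \ge W_b(x)$. As $a$, $b$, and $x$ were arbitrary, the full family of inequalities of Definition~\ref{def:wp} holds for the same weights $w_i$. Because the weight vector is never altered, any structural properties it enjoys — in particular nonnegativity and the normalization $\sum_i w_i = 1$ — are carried along unchanged, so $f$ is a weighted plurality.

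The argument is short, so the point I would watch most closely is the bookkeeping that collapses everything onto a single representative pair: it is essential that neutrality means invariance under the \emph{entire} symmetric group on $[k]$, since that is exactly what lets us transport an arbitrary ordered pair $(a,b)$ of distinct alternatives to the fixed pair $(1,2)$. I would also record the degenerate situations in which $a = 1$ or $b = 2$ already holds, taking $\sigma$ to fix those labels, and note that the same scheme runs with the pair $(1,0)$ in place of $(1,2)$ when $k = 2$, recovering the antisymmetric reduction of~\cite{HKM}. The only genuine subtlety is that the conclusion ``$f$ is a weighted plurality'' requires the certifying weights to be nonnegative and normalized, which is why it is crucial that the reduction reuses the \emph{same} weights rather than producing new ones.
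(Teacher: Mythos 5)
Your proof is correct and is exactly the symmetrization argument the paper has in mind (the paper labels this proposition a ``trivial observation'' and gives no proof): transport an arbitrary ordered pair $(a,b)$ of distinct alternatives to the fixed pair $(1,2)$ via a permutation $\sigma$ with $\sigma(a)=1$, $\sigma(b)=2$, apply neutrality to get $f(\sigma(x))=1$, and translate $W_1(\sigma(x))\ge W_2(\sigma(x))$ back to $W_a(x)\ge W_b(x)$ for the same weight vector. One small remark: your closing step, which carries nonnegativity and the normalization $\sum_i w_i = 1$ along unchanged, implicitly reads the hypothesis as supplying weights in $\R_{\ge 0}$ summing to $1$ --- this is surely the intended reading (the literal ``$w_i \in \R$'' in the statement would be satisfied trivially by $w \equiv 0$, matching the constraints $w_i \ge 0$, $\sum_i w_i = 1$ in the linear program that follows), but it deserves to be said explicitly.
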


We can write a linear program for checking whether a given neutral
function $f$ is a weighted plurality. The variables for this program
are $t$; $w_i$ for each $i \in [n]$ and $g_x$ for each $x \in [k]^n$
for which $f(x) = 1$. In standard form, the primal program is
the following:
\begin{alignat*}{2}
  \text{maximize }   & t_+ - t_- \\
  \text{subject to } & g_x \ge 0 \text{ for all }
                       x \in [k]^n\text{ such that } f(x) = 1 \\
                     & w_i \ge 0 \text{ for all } i \in [n] \\
                     & t_+ \ge 0 \text{ and } t_- \ge 0 \\
                     & \sum_i w_i = 1 \\
                     & \sum_{i: x_i = 1} w_i - \sum_{i: x_i = 2} w_i - g_x - (t_+ - t_-) = 0
                       \text{ for all } x \in [k]^n
                       \text{ with } f(x) = 1.
\end{alignat*}

\begin{proposition}
Let $t^*$ be the value of the above linear program.
If $t^* \ge 0$ then $f$ is a weighted plurality function.
\end{proposition}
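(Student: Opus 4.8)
The plan is to read off witnessing weights directly from an optimal solution of the primal program and then invoke Proposition~\ref{prop:fair}. The program was designed so that its value is the largest achievable worst-case margin,
\[
t^* = \max_{w} \ \min_{x : f(x) = 1} \left( \sum_{i : x_i = 1} w_i - \sum_{i : x_i = 2} w_i \right),
\]
the maximum running over weight vectors with $w_i \ge 0$ and $\sum_i w_i = 1$; the slack variables $g_x$ and the decomposition $t = t_+ - t_-$ of the free objective variable are merely the standard-form encoding of this quantity. Thus the content of the claim is that the condition $t^* \ge 0$ is \emph{exactly} the hypothesis supplied by Proposition~\ref{prop:fair}.

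First I would check that the maximum is attained. The weight variables lie in the simplex $\{w : w_i \ge 0,\ \sum_i w_i = 1\}$, which is compact and nonempty; once $w$ is fixed, the equality constraints together with $g_x \ge 0$ force $t \le \sum_{i : x_i = 1} w_i - \sum_{i : x_i = 2} w_i$ for every winning $x$, so the program reduces to maximizing the continuous function $w \mapsto \min_{x : f(x) = 1}\left( \sum_{i : x_i = 1} w_i - \sum_{i : x_i = 2} w_i \right)$ over this compact set, and an optimizer $(w^*, g^*, t_+^*, t_-^*)$ with $t_+^* - t_-^* = t^* \ge 0$ exists. Extracting the weights $w^*$, which satisfy $w_i^* \ge 0$ and $\sum_i w_i^* = 1$, and rearranging the equality constraint while using $g_x^* \ge 0$ and $t^* \ge 0$ gives, for every $x$ with $f(x) = 1$,
\[
\sum_{i : x_i = 1} w_i^* - \sum_{i : x_i = 2} w_i^* = g_x^* + (t_+^* - t_-^*) \ge t^* \ge 0.
\]

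This is precisely the inequality $\sum_{i : x_i = 1} w_i^* \ge \sum_{i : x_i = 2} w_i^*$ of Proposition~\ref{prop:fair}, and since $f$ is neutral that proposition upgrades the pairwise condition on the alternatives $1$ and $2$ to the full weighted-plurality property for all pairs $a, b \in [k]$, finishing the proof. I do not expect a genuine obstacle here: the real work lies in the design of the program and in Proposition~\ref{prop:fair}, which together arrange that the objective measures exactly the margin that the weighted-plurality condition constrains. The only points needing care are the (routine) verification that the optimum is attained, so that the weights $w^*$ exist rather than merely being approached in a limit, and the bookkeeping that converts the standard-form variables $t_+, t_-, g_x$ back into the single margin inequality.
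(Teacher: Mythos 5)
Your proof is correct and follows essentially the same route as the paper: rearrange the equality constraint at a feasible point with nonnegative objective, use $g_x \ge 0$, and invoke Proposition~\ref{prop:fair}. The only difference is that you explicitly verify attainment of the optimum via compactness of the simplex, a point the paper passes over silently when it assumes a feasible point with $t_+ - t_- \ge 0$ exists.
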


\begin{proof}
Let $w_i$, $g_x$, $t_+$ and $t_-$
be feasible points such that $t_+ - t_- \ge 0$. Then,
for all $x$ with $f(x) = 1$,
\[
\sum_{i: x_i = 1} w_i - \sum_{i: x_i = 2} w_i = g_x + (t_+ - t_-) \ge 0
\]
and so $f$ satisfies the conditions of Proposition~\ref{prop:fair}.
\end{proof}

Now consider the dual program; since the primal is in
standard form, the dual is easy to write down. Let the
dual variables be $a$ and $q_x$ for all $x$ such that
$f(x) = 1$. Then the dual program is:
\begin{alignat*}{2}
  \text{minimize } & a_+ - a_- \\
  \text{subject to } & \sum_{x:f(x) = 1} q_x \le -1 \\
                     & \sum_{x:f(x) = 1} (\1{x_i = 1} - \1{x_i = 2}) q_x + (a_+ - a_-) \ge 0
                       \text{ for all } i \in [n] \\
                     & q_x \le 0 \text{ for all } x
                       \text{ such that } f(x) = 1 \\
                     & a_+ \le 0 \text{ and } a_- \le 0.
\end{alignat*}

\begin{proposition}
Let $a^*$ be the value of the above dual program.
If $a^* < 0$ then there exists a probability distribution on $[k]^n$
such that $P(X_i = 2) > P(X_i = 1)$ for all $i$ but $f(X) = 1$
almost surely.
\end{proposition}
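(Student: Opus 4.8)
The plan is to read the desired probability distribution directly off the dual variables $q_x$, which are indexed precisely by the configurations $x$ with $f(x) = 1$. Since the dual value $a^*$ is finite, I may fix a feasible dual solution $(a_+, a_-, (q_x))$ whose objective $a_+ - a_-$ equals $a^* < 0$ (the minimum is attained because the LP is feasible and bounded below; alternatively any feasible point with negative objective would do). All the listed dual constraints then hold at this point.

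The first step is to turn the $q_x$ into a probability measure. Because each $q_x \le 0$ and $\sum_{x: f(x)=1} q_x \le -1$, the quantity $Q := -\sum_{x: f(x)=1} q_x$ satisfies $Q \ge 1 > 0$. I would then define $P$ on $[k]^n$ by setting $P(x) = -q_x / Q$ whenever $f(x) = 1$ and $P(x) = 0$ otherwise. Each $P(x) \ge 0$ and $\sum_x P(x) = 1$, so $P$ is a genuine probability distribution. Moreover $P$ is supported on $\{x : f(x) = 1\}$, which immediately yields $f(X) = 1$ almost surely.

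It remains to verify the marginal condition $P(X_i = 2) > P(X_i = 1)$ for every $i$. Writing out the two marginals and subtracting gives
\[
P(X_i = 2) - P(X_i = 1) = \frac{1}{Q} \sum_{x : f(x) = 1} (\1{x_i = 1} - \1{x_i = 2}) q_x .
\]
The second family of dual constraints states exactly that $\sum_{x : f(x)=1} (\1{x_i=1} - \1{x_i=2}) q_x \ge -(a_+ - a_-) = -a^*$. Since $a^* < 0$, the right-hand side $-a^*$ is strictly positive, so $P(X_i = 2) - P(X_i = 1) \ge -a^*/Q > 0$, which is what we wanted. Together with the previous paragraph this exhibits the required distribution.

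I expect the only delicate point to be the sign bookkeeping rather than any conceptual obstacle: one must keep track of the fact that the dual variables $q_x$ are non-positive, so that $-q_x$ (and not $q_x$) plays the role of an unnormalized probability, and that the free variable $a = a_+ - a_-$ enters the marginal-gap inequality with the orientation that turns a strictly negative optimal value $a^*$ into a strict inequality in the correct direction. Once the normalization by $Q$ and these signs are fixed, the construction is essentially forced by the structure of the dual, and nothing further is needed.
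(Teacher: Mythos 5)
Your proof is correct and follows essentially the same route as the paper: normalize the dual variables $q_x$ into a distribution supported on $\{x : f(x)=1\}$ and use the constraint $\sum_x (\1{x_i=1}-\1{x_i=2})q_x + (a_+-a_-) \ge 0$ with $a_+-a_-<0$ to get the strict marginal gap. Your explicit normalization by $Q=-\sum_x q_x>0$ is in fact slightly cleaner than the paper's (which writes $p_x = -q_x/(\sum_x q_x)$, an apparent sign slip), and nothing further is needed.
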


\begin{proof}
Choose a feasible point with $a_+ - a_- < 0$ and define $p_x = -q_x / (\sum_x q_x)$.
Then $p_x \ge 0$ and $\sum_x p_x = 1$, so we can define a probability
distribution by $P(X = x) = p_x$ when $f(x) = 1$ and $P(X = x) = 0$
otherwise. Under this distribution, $f(X) = 1$ with probability 1.
On the other hand, with $a_+ - a_- < 0$
the constraints of the dual program imply that
\[
\sum_{x:f(x) = 1} \1{x_i = 1} q_x > \sum_{x:f(x) = 1} \1{x_i = 2} q_x
\]
for all $i$.
Thus,
\[
P(X_i = 1) = \sum_{x:f(x) = 1} \1{x_i = 1}p_x < \sum_{x:f(x) = 1} \1{x_i = 2}p_x = P(X_i = 2)
\]
for all $i$.
\end{proof}

To conclude the proof of Theorem~\ref{thm:wp}, note that both
the primal and dual programs are feasible and bounded
and so $a^* = t^*$.

\bibliography{plurality}
\bibliographystyle{plain}

\end{document}